\newtheorem{theorem}{Theorem}[section]
\newtheorem{lemma}[theorem]{Lemma}
\theoremstyle{definition}
\newtheorem{example}[theorem]{Example}
\theoremstyle{remark}
\numberwithin{equation}{section}
\newcommand{\Z}{\mathbb{Z}}
\newcommand{\Span}{\text{span}}
\begin{document}

\title[Existence and non-existence of frequently hypercyclic subspaces]{Existence and non-existence of frequently hypercyclic subspaces for weighted shifts}

\author[Q. Menet]{Quentin Menet}
\address{Institut de Mathématique\\
Université de Mons\\
20 Place du Parc\\
7000 Mons, Belgique}
\email{Quentin.Menet@umons.ac.be}
\thanks{The author is supported by a grant of FRIA}

\subjclass[2010]{Primary 47A16}
\keywords{Hypercyclicity; Frequent hypercyclicity; Hypercyclic subspaces; Weighted shifts}

\date{}


\date{}

\maketitle

\begin{abstract}
We study the existence and the non-existence of frequently hypercyclic subspaces in Banach spaces. In particular, we give an example of a weighted shift on $l^p$ possessing a frequently hypercyclic subspace and an example of a frequently hypercyclic weighted shift on $l^p$ possessing a hypercyclic subspace but no frequently hypercyclic subspace. The latter example allows us to answer positively Problem~1 posed by Bonilla and Grosse-Erdmann in [Monatsh. Math. 168 (2012)].
\end{abstract}
\maketitle

\section{Introduction}

Let $X$ be a separable infinite-dimensional Banach space. We denote by $L(X)$ the space of continuous linear operators from $X$ to itself. 

An operator $T\in L(X)$ is said to be \emph{hypercyclic} if there exists a vector $x\in X$ (called hypercyclic) such that the set $\{T^nx:n\ge 0\}$ is dense in $X$. The hypercyclicity is an important notion of linear dynamics and we refer to the books~\cite{2Bayart2,2Grosse2} for more details about this theory. In 2004, Bayart and Grivaux~\cite{2Bayart0,2Bayart} defined a new notion of hypercyclicity, called frequent hypercyclicity. An operator $T\in L(X)$ is said to be \emph{frequently hypercyclic} if there exists a vector $x\in X$  (also called frequently hypercyclic) such that for any non-empty open set $U\subset X$,
\[\underline{\text{dens}}\{n\ge 0:T^nx\in U\}>0,\]
where for any subset $A$ of $\mathbb{Z}_+$ (the set of non-negative integers), we define the lower density of $A$ as 
\[\underline{\text{dens}}\, A=\liminf_{N\rightarrow \infty}\frac{\#(A\cap[0,N])}{N+1}.\]

A useful sufficient condition for frequent hypercyclicity has been given by Bayart and Grivaux~\cite{2Bayart0,2Bayart} and improved by Bonilla and Grosse-Erdmann~\cite{4Bonilla}:

\begin{theorem}[(Frequent Hypercyclicity Criterion)]
Let  $X$ be a separable infinite-dimensional Banach space and $T\in L(X)$. Suppose that there exist a dense subset $X_0$ of $X$ and a map $S:X_0\to X_0$ such that, for all $x\in X_0$,
\begin{enumerate}[\upshape (1)]
\item $\sum_{n=1}^{\infty}T^nx$ converges unconditionally,
\item $\sum_{n=1}^{\infty}S^nx$ converges unconditionally,
\item $TSx=x$.
\end{enumerate}
Then $T$ is frequently hypercyclic.
\end{theorem}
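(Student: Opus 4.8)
The plan is to construct explicitly a frequently hypercyclic vector by superposing, with suitable time delays, the backward orbits of a dense sequence of vectors. Fix a dense sequence $(y_j)_{j\ge 1}$ in $X$ with every $y_j\in X_0$ (possible since $X_0$ is dense); it is convenient to allow repetitions so that each point of a countable dense set is listed infinitely often. Using the unconditional convergence in hypotheses (1) and (2), for each $k$ I would pick an integer $N_k$, increasing in $k$, so large that
\[
\Bigl\|\sum_{n\in F}T^n y_k\Bigr\|\le 2^{-k}\qquad\text{and}\qquad \Bigl\|\sum_{n\in F}S^n y_k\Bigr\|\le 2^{-k}
\]
for every finite set $F\subseteq[N_k,\infty)$; by unconditional convergence the same bounds pass to infinite subsets.

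The combinatorial heart of the argument, and the step I expect to be the main obstacle, is a lemma of Bayart--Grivaux type producing pairwise disjoint sets $A_k\subseteq\mathbb{Z}_+$, each of strictly positive lower density, with $\min A_k\ge N_k$ and with the large-gap property that $|n-m|\ge\max(N_k,N_l)$ whenever $n\in A_k$, $m\in A_l$ and $n\ne m$. Intuitively the $A_k$ are carried by arithmetic progressions whose steps grow with $k$; the delicate point is to keep every lower density positive while forcing the spacing to increase with the index, and I would prove it by a greedy inductive construction along a fast-growing scale. Since nothing earlier in the paper is available here, this lemma has to be established (or quoted) separately.

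With these ingredients I would set $x=\sum_{k\ge 1}\sum_{n\in A_k}S^n y_k$ and first check convergence: because $\min A_k\ge N_k$, each inner block is a subseries over a subset of $[N_k,\infty)$ and so has norm at most $2^{-k}$, whence the series converges in $X$. For $m\in A_j$ I would then exploit $TS=\mathrm{id}$ on $X_0$, which by induction (using $S(X_0)\subseteq X_0$) gives $T^nS^n=\mathrm{id}$ on $X_0$, hence $T^mS^ny_k=T^{m-n}y_k$ for $n\le m$ and $T^mS^ny_k=S^{n-m}y_k$ for $n\ge m$. Applying the continuous operator $T^m$ term by term and isolating the single index $n=m$, where disjointness forces $k=j$ and $T^mS^my_j=y_j$, yields
\[
T^m x-y_j=\sum_{k\ge 1}\Bigl(\sum_{\substack{n\in A_k\\ n<m}}T^{m-n}y_k+\sum_{\substack{n\in A_k\\ n>m}}S^{n-m}y_k\Bigr).
\]

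Finally I would estimate this error. The gap property guarantees that for $m\in A_j$ every occurring exponent $|n-m|$ is at least $\max(N_k,N_j)\ge N_k$, so the choice of $N_k$ bounds the $k$-th block by $2\cdot 2^{-k}$; in particular each block is dominated by the summable sequence $2^{-k}$. Moreover, for fixed $k$ the relevant exponents are at least $N_j\to\infty$ as $j\to\infty$, so each block tends to $0$, and dominated convergence over $k$ produces a bound $C_j\to 0$ with $\|T^mx-y_j\|\le C_j$ for every $m\in A_j$. Consequently, given a nonempty open set $U$ and $\varepsilon>0$ with $B(y_j,\varepsilon)\subseteq U$, I would use the repetitions to choose an index $j$ with $y_j\in U$ and $C_j<\varepsilon$; then $A_j\subseteq\{n:T^nx\in U\}$, so this set has positive lower density. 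As $U$ was arbitrary, $x$ is frequently hypercyclic.
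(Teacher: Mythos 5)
The paper does not actually prove this statement: it is quoted as a known result of Bayart--Grivaux and Bonilla--Grosse-Erdmann (see \cite{2Bayart0,2Bayart,4Bonilla}), so there is no internal proof to compare against. Your argument is essentially the standard proof from those sources, and it is correct. The step you rightly single out as the crux --- the existence of pairwise disjoint sets $A_k\subseteq\mathbb{Z}_+$ of positive lower density with $\min A_k\ge N_k$ and $|n-m|\ge\max(N_k,N_l)$ for distinct $n\in A_k$, $m\in A_l$ --- is a genuine and nontrivial combinatorial lemma, not something to be dispatched by an unspecified ``greedy construction''; it is exactly Lemma 9.5 of the cited book of Grosse-Erdmann and Peris \cite{2Grosse2}, which produces disjoint sets of positive lower density with elements of $A_k$ at least $k$ and gaps at least $k+l$, and your version follows by reindexing along the subsequence $(N_k)$. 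Provided you quote that lemma, the remainder of your argument is complete: the Cauchy criterion for unconditional convergence justifies the choice of $N_k$ and the passage to infinite subsets; the identity $T^nS^n=\mathrm{id}$ on $X_0$ (valid because $S$ maps $X_0$ into $X_0$) gives the decomposition of $T^mx-y_j$; the gap condition bounds the $k$-th block by $2\cdot 2^{-k}$ and forces each block to vanish as $j\to\infty$, so that $\sup_{m\in A_j}\|T^mx-y_j\|\to 0$; and the repetition of each point of the dense set infinitely often in $(y_j)$ lets you hit any open set along some $A_j$ of positive lower density.
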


In this paper, we are interested in the existence of infinite-dimensional closed subspaces in which every non-zero vector is frequently hypercyclic. Such a subspace is called a \emph{frequently hypercyclic subspace}. The notion of a frequently hypercyclic subspace has been studied for the first time by Bonilla and Grosse-Erdmann~\cite{4Bonilla2}. In particular, they obtain the following sufficient condition for the existence of frequently hypercyclic subspaces:

\begin{theorem}[\cite{4Bonilla2}]\label{Crit M0}
Let $X$ be a separable infinite-dimensional Banach space and $T\in L(X)$. Suppose that
\begin{enumerate}[\upshape (1)]
\item $T$ satisfies the Frequent Hypercyclicity Criterion;
\item there exists an infinite-dimensional closed subspace $M_0$ of $X$ such that, for all $x\in M_0$, $T^{n}x\to 0$ as $n\to \infty$.
\end{enumerate}
Then $T$ has a frequently hypercyclic subspace.
\end{theorem}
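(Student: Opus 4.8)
The plan is to construct a frequently hypercyclic vector inside $M_0$-perturbed data, so let me think about how to combine the Frequent Hypercyclicity Criterion with the existence of the subspace $M_0$ where orbits go to zero.

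The standard template for producing hypercyclic *subspaces* (not just vectors) is the following: one builds a single basic sequence $(u_k)$ spanning an infinite-dimensional closed subspace $M$ such that every nonzero element of $\overline{\Span}(u_k)$ is frequently hypercyclic. The key mechanism is to take the vectors $u_k$ living inside $M_0$ (so that their orbits decay) but carefully perturbed along a sequence of "blocks" supplied by the Frequent Hypercyclicity Criterion so that the perturbation itself is frequently hypercyclic and, crucially, the decay in $M_0$ dominates all the error terms.

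So here is how I would proceed. First I would invoke the Frequent Hypercyclicity Criterion (hypothesis (1)) to extract the standard data: a dense set $X_0$, the right-inverse map $S$, and, via the probabilistic/constructive lemma behind the criterion, a frequently hypercyclic vector $z = \sum_k S^{n_k} y_k$ of a controlled form, where $(y_k)$ ranges over a countable dense family and the support times $n_k$ have positive lower density in each target window. The heart of frequent hypercyclicity is exactly that one can choose disjoint sets of integers $A(j)$ of positive lower density, indexed by the $j$-th element of a dense sequence $(w_j)$, so that along $n\in A(j)$ the orbit lands near $w_j$. Next, using hypothesis (2), I would pick a normalized basic sequence $(e_k)$ in $M_0$; because $T^n e_k \to 0$, for any prescribed summable tolerance I can pass to a subsequence and rescale so that $\sum_k \sup_{n\ge 0}\|T^n e_k\|$ is as small as I like. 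Then I would define the subspace-generating vectors as $u_k = e_k + v_k$, where $v_k$ is a tail of the Frequent Hypercyclicity Criterion series (a vector of the form $\sum_{i} S^{m_i} y_i$ with $m_i$ very large), chosen block-by-block so that the $v_k$ are nearly disjointly supported and each $u_k$ inherits the frequent-hypercyclicity landing property from its $v_k$ component while the $e_k$ component guarantees that the $(u_k)$ remain a basic sequence equivalent to $(e_k)$.

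The verification then splits into two checks. For any nonzero $x=\sum_k a_k u_k$ in $M=\overline{\Span}(u_k)$, I must show $x$ is frequently hypercyclic. Writing $x = \sum_k a_k e_k + \sum_k a_k v_k$, the first sum has orbit tending to zero (it lies essentially in $M_0$, up to the controlled perturbation), so the dynamics of $x$ are governed by $\sum_k a_k v_k$. Fixing the first nonzero coefficient $a_{k_0}$ and a target open set $U$, I would use the positive-lower-density landing times attached to $v_{k_0}$, together with the almost-disjointness of the blocks and the smallness of $\sum_k\sup_n\|T^n e_k\|$, to guarantee that $T^n x$ visits $U$ along a set of positive lower density. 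The main obstacle — and the step deserving the most care — is controlling the *interference* between the different blocks $v_k$ simultaneously: a single $x$ mixes infinitely many coefficients, and one must ensure that while the $v_{k_0}$-block steers $T^n x$ into $U$, the contributions of all other blocks $\sum_{k\neq k_0} a_k T^n v_k$ (and the tails from $e_k$) stay uniformly small on the relevant density set. This is arranged by choosing the gap lengths between blocks increasing rapidly enough and the weights $\|v_k\|$ decaying fast enough that both the unconditional-convergence estimates from the Criterion and the $M_0$-decay estimates beat the fixed lower-bound required to land in $U$; the positivity of the lower density survives because the landing set for a single block already has positive lower density and we only remove a set of density zero coming from the large-$k$ interference.

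Once both the basicness of $(u_k)$ (so that $M$ is a genuine infinite-dimensional closed subspace and every nonzero $x$ has a well-defined leading coefficient) and the positive-lower-density landing property are established for every such $x$, the proof concludes: $M$ is a frequently hypercyclic subspace. I expect the bookkeeping of the nested choices — dense sequence, blocks, gaps, weights, tolerances — to be the bulk of the technical work, while the conceptual content is simply that $M_0$-decay lets one add an infinite-dimensional "inert" direction to the single frequently hypercyclic vector produced by the Criterion without destroying the frequency of returns.
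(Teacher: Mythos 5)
This theorem is quoted from Bonilla and Grosse-Erdmann \cite{4Bonilla2}; the paper states it without proof, so there is no in-paper argument to compare with. Your architecture is nevertheless the right one and matches the cited proof's strategy (which adapts the classical hypercyclic-subspace construction, cf.\ Chapter 10 of \cite{2Grosse2}): perturb a basic sequence taken in $M_0$ by small pieces of a frequently hypercyclic vector supplied by the Criterion, and verify that every nonzero element of the closed span is frequently hypercyclic.

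The gap is precisely at the step you flag as the main obstacle, and the fix you propose (gaps growing fast, $\|v_k\|$ decaying fast) does not suffice. Two points. (i) For $x=\sum_k a_k u_k$ with first nonzero coefficient $a_{k_0}$, the block $v_{k_0}$ steers $T^n x$ towards $a_{k_0}y_j$, not towards $y_j$; this is repairable because $\{a_{k_0}y_j\}_j$ is dense when $a_{k_0}\neq 0$, but it forces you to reserve, for \emph{every} triple (block index, target, accuracy), its own landing set of positive lower density, and this indexing must be built into the construction of the $v_k$ -- your sketch attaches landing times to $v_{k_0}$ only. (ii) More seriously, the tolerance to be beaten is not ``fixed'': it depends on the target open set $U$ (arbitrarily small) and on $|a_{k_0}|$, which for $\|x\|=1$ can be arbitrarily small while the later coefficients are only bounded by a constant times the basis constant. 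A once-and-for-all interference bound of the form $\sum_{k\neq k_0}|a_k|\,\|T^n v_k\|\le C\sum_k \varepsilon_k$, with $\sum_k\varepsilon_k$ chosen small in advance, is a fixed positive number and cannot be smaller than the radius of every target ball. The standard resolution is to make the interference estimate improve with the accuracy parameter of the landing set in use: for $n$ in the set reserved for $(k_0,j,l)$ one needs $\sum_{k\neq k_0}|a_k|\,\|T^n v_k\|$ to be of order $1/l$, which is obtained from the separation properties of the reserved sets together with the unconditional convergence in the Criterion, not from decay of $\|v_k\|$ in $k$ alone. Relatedly, your claim that positivity of the lower density ``survives because we only remove a set of density zero coming from the large-$k$ interference'' is unjustified and is not how the argument runs: the interference must be controlled on the whole reserved landing set, since removing a positive-density exceptional set could destroy the lower-density bound.
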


The convergence to $0$ along the \emph{whole} sequence $(n)$ for each vector in $M_0$ is a strong assumption and the examples of frequently hypercyclic operators satisfying this criterion are mainly operators for which there exists some scalar $\lambda$ with $|\lambda|<1$ such that $\dim \ker (T-\lambda)=\infty$. In particular, so far no weighted shift with frequently hypercyclic subspaces is known. 


On the other hand, an operator $T$ possesses a \emph{hypercyclic subspace} if there exists an infinite-dimensional closed subspace in which every non-zero vector is hypercyclic for $T$. Obviously, if $T$ does not possess a hypercyclic subspace or if $T$ is not frequently hypercyclic, then $T$ does not possess a frequently hypercyclic subspace. Bonilla and Grosse-Erdmann have then asked the following question~\cite[Problem~1]{4Bonilla2}:\\

\noindent\emph{\textbf{Problem 1}: Does there exist a frequently hypercyclic operator that has a hypercyclic
subspace but not a frequently hypercyclic subspace?}\\


In Section~\ref{nonexist}, we state a sufficient condition for the non-existence of frequently hypercyclic subspaces. Then we show that there exists some frequently hypercyclic weighted shift on $l^p$ that possesses a hypercyclic subspace and that satisfies this condition. This allows us to answer positively Problem~1.

In Section~\ref{exist}, we state a slight improvement of Theorem~\ref{Crit M0} and we give an example of a weighted shift on $l^p$ possessing a frequently hypercyclic subspace.

%
%
%
%

\section{Non-existence of frequently hypercyclic subspaces}\label{nonexist}

In the case of hypercyclic subspaces, we know a simple criterion \cite{3Leon,Menet} such that if an operator $T$  satisfies this criterion, then $T$ does not possess any hypercyclic subspace. 

\begin{theorem}\label{CritE}
Let $X$ be a separable infinite-dimensional Banach space and ${T\in L(X)}$.
If there exists $C>1$, a closed subspace $E$ of finite codimension in $X$ and a positive integer $n\ge 1$ such that
\[\|T^nx\|\ge C\|x\| \quad \text{for any $x\in E$,}\]
then $T$ does not possess any hypercyclic subspace.
\end{theorem}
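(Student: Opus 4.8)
The plan is to argue by contradiction: assuming that $T$ has a hypercyclic subspace $M$, I will produce a single non-zero vector $x\in M$ whose orbit is bounded away from $0$, which is impossible, since every non-zero vector of $M$ is hypercyclic and a dense orbit must approach $0$. Concretely, writing $S=T^n$ and $B=\max_{0\le i\le n}\|T^i\|$, it suffices to find a non-zero $x\in M$ with $\inf_{m\ge 0}\|T^mx\|>0$: indeed, if $x$ is hypercyclic then $0$ lies in the closure of $\{T^mx:m\ge0\}$, so that necessarily $\inf_m\|T^mx\|=0$.

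First I would exploit the finite codimension to manufacture vectors whose initial iterates expand. Let $\pi\colon X\to X/E$ be the quotient map and put $d=\dim(X/E)<\infty$. For each $k$ set $M_k=\{x\in M:\ S^jx\in E \text{ for } 0\le j<k\}$, which is the kernel of the linear map $M\to (X/E)^{k}$, $x\mapsto(\pi x,\pi Sx,\dots,\pi S^{k-1}x)$; hence $M_k$ has codimension at most $kd$ in $M$ and is therefore infinite-dimensional, with $M_{k+1}\subseteq M_k$. For $x\in M_k$ the expansivity hypothesis applies at each of the steps $0,\dots,k-1$, giving $\|S^jx\|\ge C^j\|x\|$ for $0\le j\le k$; interpolating with $B$ (every $m\le nk$ can be written $m=nj+r$ with $T^{n(j+1)}x=T^{n-r}(T^mx)$, whence $\|x\|\le\|S^{j+1}x\|\le B\|T^mx\|$) yields the finite-window bound $\|T^mx\|\ge \|x\|/B$ for all $0\le m\le nk$. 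Thus on the nested infinite-dimensional pieces $M_k$ the orbit is uniformly bounded below for longer and longer initial segments.

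It remains to upgrade these finite-window bounds to a single vector enjoying the bound for all $m$. The naive move --- normalize $x_k\in M_k$ and pass to a limit point in $\bigcap_k M_k$ --- fails, since $\bigcap_k M_k$ may be trivial and the unit sphere of the infinite-dimensional space $M$ is not compact; this is the main obstacle. I would resolve it by a gliding-hump construction: choose inductively a basic sequence $(u_k)$ with $u_k\in M_k$, $\|u_k\|=1$, together with scalars $a_k$ tending to $0$ fast enough that $x=\sum_k a_ku_k$ converges in the closed subspace $M$ and is non-zero, and so that at every time $m$ a single designated block $k(m)$, with $nk(m)\ge m$, dominates. The delicate point is the interference between blocks: the blocks $u_k$ with $nk<m$ are no longer controlled by the expansivity and, being hypercyclic vectors, have wild iterates $T^mu_k$, so the coefficients $a_k$ must be forced down quickly enough (using the boundedness of the finitely many powers $T^i$ on the relevant time windows) that the sum of all non-dominant contributions to $T^mx$ stays below $\tfrac12\,|a_{k(m)}|/B$, while the dominant term has norm at least $|a_{k(m)}|/B$. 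Carrying out this bookkeeping gives $\inf_m\|T^mx\|>0$ for a non-zero $x\in M$, the desired contradiction. The crux of the whole argument is therefore this interference control, namely passing from the finite-segment lower bounds on the subspaces $M_k$ to an all-time lower bound for one vector despite the non-compactness of the sphere.
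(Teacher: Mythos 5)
Your global strategy --- produce in any infinite-dimensional closed subspace $M$ a non-zero vector whose orbit is bounded away from $0$, hence not hypercyclic --- is exactly the one the paper indicates (it only sketches the proof, citing Le\'on-Saavedra--M\"uller and Lemma~\ref{basseq}), and your first stage is sound: the sets $M_k=\{x\in M:\ T^{jn}x\in E,\ 0\le j<k\}$ are closed, of finite codimension in $M$, hence infinite-dimensional, and for $x\in M_k$ one does get $\|T^mx\|\ge\|x\|/B$ for $0\le m\le nk$. The gap is in the gliding-hump stage. You propose to keep the non-dominant contributions to $T^mx=\sum_k a_kT^mu_k$ below $\tfrac12|a_{k(m)}|/B$ by making the coefficients decay fast. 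That can only control blocks chosen \emph{after} the dominant one; it cannot control blocks chosen \emph{before} it. Indeed, at a time $m$ in the window of block $k(m)$ your requirement forces in particular $|a_1|\,\|T^mu_1\|\le\tfrac12|a_{k(m)}|/B$; but $a_1$ is fixed at the first step, $u_1\in M$ is hypercyclic so $\sup_m\|T^mu_1\|=\infty$, and $|a_{k(m)}|\to 0$, so this fails for infinitely many $m$ no matter how the later coefficients are chosen. Following your own prescription (bounding the early iterates by $\|T\|^{nk}$ on the window of block $k$) literally yields the requirement $|a_k|\ge 2B\,\|T\|^{nk}\,|a_1|$, i.e.\ the coefficients would have to \emph{grow}, destroying convergence of $\sum_k a_ku_k$. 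And even a bound on the size of the early contribution would not suffice: without some almost-orthogonality the early blocks could nearly cancel the dominant one at particular times.

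The missing ingredient is precisely Lemma~\ref{basseq}, which replaces ``make the interference small'' by ``make the blocks almost orthogonal.'' One chooses each new vector $u_{k+1}$ so that its relevant iterates $T^mu_{k+1}$ lie in finite-codimension subspaces $E_{k,m}$ adapted to $F_{k,m}=\Span(T^mu_1,\dots,T^mu_k)$; the lemma then gives $\|\mathrm{head}+\mathrm{tail}\|\ge\frac{1}{2+\varepsilon}\|\mathrm{tail}\|$ (and $\ge\frac{1}{1+\varepsilon}\|\mathrm{head}\|$), so at time $m$ the full sum is bounded below by a constant multiple of the norm of the block that is under expansive control, \emph{regardless of how wild the earlier iterates are}. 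This is the same device used in conditions (2)--(3) and the displayed estimate of the proof of Theorem~\ref{9no fhcsub}. With that replacement your argument goes through; as written, the interference control does not work.
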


In fact, one can show that if $T$ satisfies the assumption of Theorem~\ref{CritE}, then each infinite-dimensional closed subspace $M$ of $X$ contains a vector $x$ such that ${\|T^n x\|\ge 1}$ for any $n$. The construction of such a vector $x$ relies on the following lemma.

\begin{lemma}[{\cite[Lemma~10.39]{2Grosse2}}]\label{basseq}
Let $X$ be a separable infinite-dimensional Banach space. For any finite-dimensional subspace $F$ of $X$, any $\varepsilon>0$, there exists a closed subspace $E$ of finite codimension in $X$ such that for any $x\in E$ and any $y\in F$,
\[\|x+y\|\ge \max \Big(\frac{\|x\|}{2+\varepsilon},\frac{\|y\|}{1+\varepsilon}\Big).\]
\end{lemma}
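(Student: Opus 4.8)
The plan is to produce the subspace $E$ as a finite intersection of kernels of norming functionals associated to a finite net of the unit sphere of $F$. Since $F$ is finite-dimensional, its unit sphere $S_F=\{y\in F:\|y\|=1\}$ is compact, so for a suitable $\delta>0$ (to be fixed below in terms of $\varepsilon$) there is a finite $\delta$-net $y_1,\dots,y_m\in S_F$. For each $i$, the Hahn--Banach theorem furnishes a functional $f_i\in X^*$ with $\|f_i\|=1$ and $f_i(y_i)=\|y_i\|=1$. I would then set $E=\bigcap_{i=1}^m\ker f_i$, which is a closed subspace of finite codimension (at most $m$) in $X$.

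First I would establish the lower bound $\|x+y\|\ge \|y\|/(1+\varepsilon)$. By homogeneity it suffices to treat the case $\|y\|=1$ (the case $y=0$ being trivial), so I pick an index $i$ with $\|y-y_i\|\le\delta$. Because $x\in E$ we have $f_i(x)=0$, hence, using $\|f_i\|=1$,
\[\|x+y\|\ge |f_i(x+y)|=|f_i(y)|\ge |f_i(y_i)|-|f_i(y-y_i)|\ge 1-\delta.\]
Choosing $\delta=\varepsilon/(1+\varepsilon)$ gives $1-\delta=1/(1+\varepsilon)$, which yields the claimed inequality after rescaling by $\|y\|$.

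The second bound then follows for free from the triangle inequality: writing $x=(x+y)-y$ and inserting the estimate just obtained gives
\[\|x\|\le \|x+y\|+\|y\|\le \|x+y\|+(1+\varepsilon)\|x+y\|=(2+\varepsilon)\|x+y\|,\]
so that $\|x+y\|\ge \|x\|/(2+\varepsilon)$. Taking the maximum of the two estimates finishes the argument.

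I expect the only genuinely delicate point to be the bookkeeping of constants: the mesh $\delta$ of the net must be matched precisely to $\varepsilon$ so that $1-\delta=1/(1+\varepsilon)$, and one must take care to apply the net argument to the normalized vector $y/\|y\|$ rather than to $y$ itself. Everything else---compactness of $S_F$, existence of the norming functionals $f_i$, and finite codimension of a finite intersection of kernels---is standard, and the second inequality is a purely formal consequence of the first.
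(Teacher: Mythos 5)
Your argument is correct and is essentially the standard proof of this lemma (the paper itself only cites \cite[Lemma~10.39]{2Grosse2} without proof, and the proof given there is exactly this Mazur-type argument: norming functionals over a finite net of the unit sphere of $F$, with the second inequality deduced from the first via the triangle inequality). The bookkeeping with $\delta=\varepsilon/(1+\varepsilon)$ and the reduction to $\|y\|=1$ are handled correctly, so there is nothing to add.
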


We would like a similar criterion for the notion of frequently hypercyclic subspaces. Obviously, if $T$ does not possess any hypercyclic subspace, then $T$ does not possess any frequently hypercyclic subspace. Nevertheless, in order to prove that an operator does not possess any frequently hypercyclic subspace, it is sufficient to prove that each infinite-dimensional closed subspace $M$ contains a vector $x$ such that the upper density of $\{n\ge 0:\|T^n x\|\ge C\}$ is equal to $1$ for some $C>0$, i.e.
\[\limsup_{N\to \infty} \frac{\#\{n\le N:\|T^n x\|\ge C\}}{N+1}=1.\]
Indeed, such a vector $x$ cannot be frequently hypercyclic since the lower density of $\{n\ge 0:\|T^nx\|<C\}$ would be equal to $0$.


\begin{theorem}\label{9no fhcsub}
Let $X$ be a separable infinite-dimensional Banach space and ${T\in L(X)}$.
If there exists $C>0$ such that for any $K\ge 1$, any infinite-dimensional closed subspace $M$ of $X$, any $x \in X$, there exists $x'\in M$ such that $\|x'\|\le \frac{1}{K}$ and
\[\sup_{k> K}\frac{\#\{n\le k:\|T^n (x+x')\|\ge C\}}{k+1}> 1-\frac{1}{K},\]
then $T$ does not possess any frequently hypercyclic subspace.
\end{theorem}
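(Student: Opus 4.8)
The plan is to show that under the stated hypothesis, every infinite-dimensional closed subspace $M$ contains a nonzero vector $x$ whose orbit fails the density condition required of frequent hypercyclicity. Specifically, I want to produce a nonzero $x\in M$ such that the upper density of $\{n\ge 0:\|T^nx\|\ge C\}$ equals $1$; as explained in the paragraph preceding the statement, such a vector cannot be frequently hypercyclic, so $M$ cannot be a frequently hypercyclic subspace, and since $M$ was arbitrary, $T$ has no frequently hypercyclic subspace.

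The natural approach is an iterative construction. I would build a sequence $(x_j)_{j\ge 1}$ in $M$ together with an increasing sequence of integers $K_j$ by repeatedly applying the hypothesis. At each stage I apply the hypothesis with $K=K_j$, the subspace $M$, and the partial sum $x=s_{j-1}:=\sum_{i<j}x_i$ already constructed, obtaining $x_j\in M$ with $\|x_j\|\le 1/K_j$ and
\[
\sup_{k>K_j}\frac{\#\{n\le k:\|T^n(s_{j-1}+x_j)\|\ge C\}}{k+1}>1-\frac{1}{K_j}.
\]
Choosing $K_j$ to grow fast enough (say $K_j\ge 2^j$) guarantees that $\sum_j \|x_j\|<\infty$, so by completeness of $X$ the partial sums converge to a vector $x=\sum_{j\ge 1}x_j\in M$, since $M$ is closed. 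I must also ensure $x\ne 0$, which I can arrange by requiring the first vector $x_1$ to be suitably large relative to the tail, or more simply by rescaling the whole construction at the end; this is a minor bookkeeping point.

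The crux is to transfer the density estimate for each finite partial sum $s_j=s_{j-1}+x_j$ to the limit vector $x$. For each $j$ the supremum condition supplies some $k_j>K_j$ with
\[
\frac{\#\{n\le k_j:\|T^n s_j\|\ge C\}}{k_j+1}>1-\frac{1}{K_j}.
\]
Since the tail $x-s_j=\sum_{i>j}x_i$ has norm at most $\sum_{i>j}1/K_i$, which is small, I would show that $\|T^n x\|$ is close to $\|T^n s_j\|$ on the relevant range of $n\le k_j$, so that $\|T^n s_j\|\ge C$ forces $\|T^n x\|\ge C/2$ (after harmlessly replacing the constant $C$ by $C/2$, or by halving $C$ from the start). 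Because $K_j\to\infty$, the densities along the subsequence $(k_j)$ tend to $1$, giving
\[
\limsup_{N\to\infty}\frac{\#\{n\le N:\|T^n x\|\ge C/2\}}{N+1}=1.
\]

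The main obstacle is exactly this transfer step: the operators $T^n$ need not be uniformly bounded, so $\|T^n(x-s_j)\|$ can be large for large $n$, even though $\|x-s_j\|$ is small. The point is that for a fixed $j$ I only need control on the finite range $n\le k_j$, where $\max_{n\le k_j}\|T^n\|$ is a finite constant. Thus the right way to organize the induction is to choose the tail bound \emph{after} $k_j$ is revealed: at step $j+1$ I know $k_j$, so I can demand $\sum_{i>j}\|x_i\|\cdot\max_{n\le k_j}\|T^n\|$ be small, forcing $\|x_i\|$ to be correspondingly tiny for $i>j$. This requires interleaving the choice of $K_{j+1}$ with the already-fixed $k_j$, which is the delicate part of the bookkeeping, but it presents no conceptual difficulty once the order of quantifiers is set up correctly.
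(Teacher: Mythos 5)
Your proposal is correct, and it reaches the conclusion by a genuinely different route at the one delicate point, namely how to pass the density estimate from the partial sums $s_j$ to the limit $x$. The paper's proof handles this with Lemma~\ref{basseq}: at step $n$ it does not apply the hypothesis to $\tilde M$ itself but to the smaller subspace $\tilde{M}\cap \bigcap_{j\le k_{n-1}}T^{-j}\bigl(\bigcap_{k\le n-1} E_{k,j}\bigr)$, where $E_{k,j}$ is the finite-codimensional subspace associated to $F_{k,j}=\Span(T^jx_0,\dots,T^jx_k)$; this forces the tail $\sum_{k>n}T^jx_k$ to lie in $E_{n,j}$, so that $\|T^jx\|\ge \frac{1}{2}\|T^js_n\|\ge \frac{C}{2}$ for $j$ in the good set, with a factor $\frac12$ that is uniform and independent of any operator norms. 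You instead keep $M$ fixed, and control the tail crudely by $\|T^n(x-s_j)\|\le \max_{m\le k_j}\|T^m\|\cdot\sum_{i>j}\|x_i\|$ on the finite range $n\le k_j$, choosing each $K_i$ (hence the bound $1/K_i$ on $\|x_i\|$) only after the finitely many constants $\max_{m\le k_{j}}\|T^m\|$, $j<i$, are known. This ordering of quantifiers is exactly right and the argument closes: you get $\|T^nx\|\ge C-\frac{C}{2}=\frac{C}{2}$ on the good sets, densities $>1-1/K_j\to 1$ along $k_j\to\infty$, and hence upper density $1$ as required. Your approach is more elementary (no basic-sequence lemma, no shrinking of the subspace fed to the hypothesis); the paper's approach is the standard one inherited from the non-existence criterion for hypercyclic subspaces (Theorem~\ref{CritE}) and yields a perturbation constant independent of $\|T^m\|$, which is aesthetically cleaner but not needed here. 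One small remark: your worry about $x\ne 0$ is moot --- the conclusion $\limsup_N\frac{\#\{n\le N:\|T^nx\|\ge C/2\}}{N+1}=1$ with $C>0$ already forces $x\ne 0$ --- and your proposed fix of ``requiring $x_1$ to be suitably large'' is not actually available, since the hypothesis only provides an upper bound on $\|x'\|$; simply drop that sentence.
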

\begin{proof}
Let $\tilde{M}$ be an infinite-dimensional closed subspace of $X$. We show that there exists a vector $x\in \tilde{M}$ such that
\[\limsup_{N\to \infty} \frac{\#\big\{n\le N:\|T^n x\|\ge \frac{C}{2}\big\}}{N+1}=1.\]


To this end, we recursively construct a sequence $(x_n)_{n\ge 0}\subset \tilde{M}$ with $x_0=0$ and an increasing sequence $(k_n)_{n\ge 0}$ with $k_0=0$ such that for any $n\ge 1$,
\begin{enumerate}
\item $\|x_n\|\le \frac{1}{n^2}$;
\item we have \[\frac{\#\{j\le k_n:\|T^j(\sum_{k=1}^nx_k)\|\ge C\}}{k_n+1}> 1-\frac{1}{n^2};\]
\item for any $j\le k_{n-1}$, $T^jx_n\in \bigcap_{k\le n-1} E_{k,j}$, where $E_{k,j}$ is the closed subspace of finite codimension given by Lemma~\ref{basseq} for $F_{k,j}=\text{span}(T^jx_0,\dots, T^jx_k)$ and $\varepsilon=1$.
\end{enumerate}
At each step, we obtain such a vector $x_n$ by using our assumption for $K=\max\{n^2,k_{n-1}\}$, $M=\tilde{M}\cap \bigcap_{j\le k_{n-1}}T^{-j}(\bigcap_{k\le n-1} E_{k,j})$ and $x=\sum_{k=1}^{n-1}x_k$. 

We let $x:=\sum_{n=1}^{\infty} x_n$ and $J_n=\{j\le k_n:\|T^j(\sum_{k=1}^nx_k)\|\ge C\}$. We know that $x\in \tilde{M}$ and moreover, for any $n\ge 1$, any $j\in J_n$, we have
\begin{align*}
\|T^jx\|&=\Big\|\sum_{k=1}^{\infty}T^jx_k\Big\|\\
&\ge \frac{1}{2}\Big\|\sum_{k=1}^{n}T^jx_{k}\Big\| \quad \text{because }\sum_{k=n+1}^{\infty}T^jx_{k}\in E_{n,j} \text{ and }\sum_{k=1}^nT^jx_k\in F_{n,j} \\
&\ge \frac{C}{2} \qquad\qquad \quad\ \ \text{because }j\in J_n.\\
\end{align*}
We deduce that for any $n\ge 1$,
\[\frac{\#\big\{j\le k_n:\|T^j x\|\ge \frac{C}{2}\big\}}{k_n+1}\ge \frac{\# J_n}{k_n+1}\ge 1-\frac{1}{n^2} \xrightarrow[n\to \infty]{} 1.
\]
The result follows.
\end{proof}

In the case of weighted shifts, Theorem~\ref{9no fhcsub} gives us a simple sufficient condition for the non-existence of frequently hypercyclic subspaces. We recall that a weighted shift $B_w:l^p\to l^p$ is defined by
\[B_w((x_0,x_1,x_2,\dots))=(w_1x_1,w_2x_2,w_3x_3,\dots),\]
where $w=(w_n)_{n\ge 1}$ is a bounded sequence of non-zero scalars. In other words, we have $B_w e_k=w_{k}e_{k-1}$, where $(e_k)_{k\ge 0}$ is the canonical basis of $l^p$ and $e_{-1}=0$.

\begin{theorem}\label{9thm wsfh}
Let $B_w:l^p\to l^p$ be a weighted shift ($1\le p<\infty$).
If there exists a sequence $(C_k)_{k\ge 0}$ of positive numbers such that
$\sum_{k\ge 0}(1/C_k)^p<\infty$ and 
\begin{equation}
\frac{\#\{n\ge 0:\|B_w^ne_k\|\ge C_k\}}{k+1}\xrightarrow[k\to\infty]{} 1,
\label{9condc}
\end{equation}
then $B_w$ does not possess any frequently hypercyclic subspace.
\end{theorem}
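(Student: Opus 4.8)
The plan is to deduce the statement from Theorem~\ref{9no fhcsub}, applied with the fixed constant $C=1$. The feature of weighted shifts that drives everything is that $B_w^n$ sends each basis vector to a multiple of another basis vector, so the $(k-n)$-th coordinate of $B_w^n y$ equals $y_k\,w_k w_{k-1}\cdots w_{k-n+1}$ and depends only on the single coordinate $y_k$; its modulus is exactly $|y_k|\,\|B_w^n e_k\|$. Thus, to force $\|B_w^n(x+x')\|$ to be large it suffices to make one well-chosen coordinate of $x+x'$ large after applying $B_w^n$, and this lets us insulate the contribution of $x'$ from the fixed (and arbitrary) vector $x$.

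So, fix $K\ge 1$, an infinite-dimensional closed subspace $M$, and $x\in l^p$. First I would choose $N$ so large that simultaneously $N>K$, the quotient in \eqref{9condc} exceeds $1-1/K$ for every $k\ge N$, and both $\sum_{k\ge N}(2/C_k)^p$ and $\sum_{k\ge N}(2|x_k|)^p$ are smaller than $\tfrac14(1/K)^p$; the first tail is small because $\sum_k(1/C_k)^p<\infty$ and the second because $x\in l^p$. Since $M$ is infinite-dimensional and $\{y:y_j=0\text{ for }j<N\}$ has finite codimension, their intersection is infinite-dimensional, so I may pick a unit vector $z\in M$ supported on indices $\ge N$ and set $x'=z/K\in M$, which has $\|x'\|=1/K$.

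The decisive step is then an $l^p$-mass count. Consider
\[S=\Big\{k\ge N:\ |x'_k|\ge \tfrac{2}{C_k}\ \text{ and }\ |x_k|\le \tfrac12|x'_k|\Big\}.\]
An index $k\ge N$ fails to lie in $S$ only if $|x'_k|<2/C_k$ or $|x'_k|<2|x_k|$, so the mass of $x'$ lying off $S$ is at most $\sum_{k\ge N}(2/C_k)^p+\sum_{k\ge N}(2|x_k|)^p<\tfrac12(1/K)^p=\tfrac12\|x'\|^p$; hence $\sum_{k\in S}|x'_k|^p>0$ and $S\neq\emptyset$. Picking any $k_0\in S$, for every $n$ in $G_{k_0}:=\{n\ge 0:\|B_w^n e_{k_0}\|\ge C_{k_0}\}$ the $(k_0-n)$-th coordinate of $B_w^n(x+x')$ has modulus $|x_{k_0}+x'_{k_0}|\,\|B_w^n e_{k_0}\|\ge \tfrac12|x'_{k_0}|\,C_{k_0}\ge (1/C_{k_0})\,C_{k_0}=1=C$, so $\|B_w^n(x+x')\|\ge C$. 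Since $k_0\ge N$, condition \eqref{9condc} gives $\#G_{k_0}/(k_0+1)>1-1/K$, and as $G_{k_0}\subseteq[0,k_0]$ this yields $\sup_{k>K}\#\{n\le k:\|B_w^n(x+x')\|\ge C\}/(k+1)>1-1/K$, which is exactly the hypothesis of Theorem~\ref{9no fhcsub}.

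I expect the main obstacle to be precisely the interplay between the arbitrariness of $M$ and of $x$: because $M$ need not contain any vector close to a single $e_{k_0}$, one cannot simply act on a basis vector, and because $x$ is arbitrary one must guard against cancellation in the coordinate one exploits. Both difficulties are resolved by the mass count above, where the summability $\sum_k(1/C_k)^p<\infty$ is used in an essential way to guarantee that, for any small vector of $M$ supported high enough, a positive proportion of its mass survives both the threshold $|x'_k|\ge 2/C_k$ and the non-cancellation requirement $|x_k|\le\tfrac12|x'_k|$. Once one such index $k_0$ is found, only a single good window $k_0$ is needed for the supremum in Theorem~\ref{9no fhcsub}, and the single-coordinate structure of $B_w$ together with \eqref{9condc} finishes the argument.
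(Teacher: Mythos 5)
Your proof is correct and follows essentially the same route as the paper: reduce to Theorem~\ref{9no fhcsub} with a fixed constant, take $x'\in M$ of norm $1/K$ supported on large indices, and use the summability of $\sum_k (1/C_k)^p$ together with the tail of $x$ to locate a single coordinate $k_0$ with $|x_{k_0}+x'_{k_0}|\ge 1/C_{k_0}$, which via the single-coordinate lower bound $\|B_w^n(x+x')\|\ge |x_{k_0}+x'_{k_0}|\,\|B_w^n e_{k_0}\|$ and \eqref{9condc} gives the required density. The only (immaterial) difference is that you find $k_0$ by a union-bound mass count on the bad set, whereas the paper compares the $\ell^p$ norms of the tails of $x+x'$ and $(1/C_k)_k$ directly via the triangle inequality.
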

\begin{proof}
Let $K\ge 1$, $M$ an infinite-dimensional closed subspace of $l^p$ and $x \in l^p$.
In view of Theorem~\ref{9no fhcsub}, it suffices to show that there exists $x'\in M$ such that $\|x'\|\le \frac{1}{K}$ and
\[\sup_{k> K}\frac{\#\{n\le k:\|B_w^n (x+x')\|\ge 1\}}{k+1}> 1-\frac{1}{K}.\]
We first remark that for any $x'\in l^p$, any $k\ge 0$,
\[\frac{\#\{n\le k:\|B_w^n(x+x')\|\ge 1\}}{k+1}\ge \frac{\#\{n\ge 0:\|(x_{k}+x'_{k})B_w^ne_{k}\|\ge 1\}}{k+1}\]
and thus, if $|x_{k}+x'_{k}|\ge 1/C_{k}$, we have
\[\frac{\#\{n\le k:\|B_w^n(x+x')\|\ge 1\}}{k+1}\ge \frac{\#\{n\ge 0:\|B_w^ne_{k}\|\ge C_{k}\}}{k+1}.\]
By \eqref{9condc}, we know that there exists $k_0$ such that for any $k\ge k_0$
\[\frac{\#\{n\ge 0:\|B_w^ne_k\|\ge C_k\}}{k+1}> 1-\frac{1}{K}.\]
We deduce that it suffices to find a vector $x'\in M$ such that $\|x'\|\le \frac{1}{K}$ and ${|x_{k}+x'_{k}|\ge 1/C_{k}}$ for some $k\ge \max(k_0, K+1)$. Let $k_1\ge\max(k_0,K+1)$ such that 
\begin{equation}
\Big(\sum_{k=k_1}^{\infty}|x_k|^p\Big)^{\frac{1}{p}}\le \frac{1}{2K}
\quad \text{and}\quad  \Big(\sum_{k=k_1}^{\infty}\frac{1}{C^p_k}\Big)^{1/p}\le \frac{1}{2K}.
\label{9ineqC}
\end{equation}
 Since $M$ is an infinite-dimensional closed subspace, we know that for any $k\ge 0$, $M$ contains a vector $y$ with valuation $v(y):=\inf\{j\ge 0:y_j\ne 0\}$ bigger than $k$. We can thus choose $x'\in M$ such that
\[v(x')\ge k_1 \quad\text{and}\quad \|x'\|= 1/K.\]
Using \eqref{9ineqC}, we deduce that
\begin{align*}
\Big(\sum_{k=k_1}^{\infty}|x'_k+x_k|^p\Big)^{1/p}&\ge \|x'\|-\Big(\sum_{k=k_1}^{\infty}|x_k|^p\Big)^{1/p}\\
&\ge \frac{1}{K}-\frac{1}{2K}=\frac{1}{2K}\\
&\ge \Big(\sum_{k=k_1}^{\infty}\frac{1}{C^p_k}\Big)^{1/p},
\end{align*}
and thus, there exists $k\ge k_1$ such that
\[|x'_{k}+x_{k}|\ge \frac{1}{C_{k}},\] 
which concludes the proof.
\end{proof}

We can now prove the existence of a frequently hypercyclic weighted shift possessing a hypercyclic subspace and no frequently hypercyclic subspace. Thanks to Bayart and Ruzsa~\cite{BayartR}, we know a characterization of frequently hypercyclic weighted shifts on $l^p$. A weighted shift $B_w:l^p\to l^p$ is frequently hypercyclic if and only if $B_w$ satisfies the Frequent Hypercyclicity Criterion, and if and only if
\begin{equation}
\sum_{k=1}^{\infty}\frac{1}{\prod_{\nu=1}^k|w_{\nu}|^p}<\infty.
\label{caracfhc}
\end{equation}
We also know a characterization of weighted shifts on $l^p$ with hypercyclic subspaces~\cite{Gonzalez,Menet}. A hypercyclic weighted shift $B_w:l^p\to l^p$ possesses a hypercyclic subspace if and only if
\begin{equation}
\sup_{n\ge 1}\inf_{k\ge 0}\prod_{\nu=1}^n |w_{k+\nu}|\le 1.
\label{carachcsub}
\end{equation}
In conclusion, it just remains to show that there is some bounded weighted sequence~$w$ satisfying \eqref{caracfhc}, \eqref{carachcsub} and the condition of Theorem~\ref{9thm wsfh}.

\begin{theorem}\label{mainthm}
There exists some frequently hypercyclic weighted shift on $l^p$ possessing a hypercyclic subspace and no frequently hypercyclic subspace.
\end{theorem}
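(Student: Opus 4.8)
The reduction carried out above leaves a purely combinatorial task: to exhibit a bounded sequence $w=(w_n)_{n\ge 1}$ of non-zero scalars satisfying \eqref{caracfhc}, \eqref{carachcsub} and the hypothesis of Theorem~\ref{9thm wsfh}. Everything is governed by the single profile $P_k:=\prod_{\nu=1}^k|w_\nu|$ (with $P_0=1$), since $\|B_w^ne_k\|=P_k/P_{k-n}$ for $1\le n\le k$ and $\|B_w^ne_k\|=0$ for $n>k$. The plan is to fix $a>1$ and set $|w_\nu|=a$ everywhere except on a sparse increasing sequence of ``plateaus'' where $|w_\nu|=1$. Concretely I would split $\N$ into consecutive blocks, block $j$ consisting of $u_j$ indices carrying the weight $a$ followed by $d_j$ indices carrying the weight $1$; thus $\log P_k$ is piecewise linear, rising with slope $\log a$ on the long parts and staying constant on the plateaus. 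A concrete choice such as $u_j=2^j$, $d_j=j$ will turn out to meet every requirement, so in the write-up I would fix such values and then verify the three properties in turn.

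Conditions \eqref{carachcsub} and \eqref{caracfhc} are the easy ones. Since every weight is $\ge 1$, any product of $n$ consecutive weights is $\ge 1$, while a window lying inside a plateau equals $1$; as the plateau lengths satisfy $d_j\to\infty$, for each $n$ some plateau has length $\ge n$, whence $\inf_{k}\prod_{\nu=1}^n|w_{k+\nu}|=1$ and therefore $\sup_n\inf_k\prod_{\nu=1}^n|w_{k+\nu}|=1$, giving \eqref{carachcsub}. For \eqref{caracfhc} one checks that $P_k$ grows essentially geometrically: up to index $k$ the plateaus occupy only $o(k)$ indices, so $\log P_k\sim k\log a$ and $\sum_k P_k^{-p}$ converges comfortably (a crude block-by-block estimate suffices). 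In particular $B_w$ is frequently hypercyclic, hence hypercyclic, and by the characterization of \cite{Gonzalez,Menet} it possesses a hypercyclic subspace.

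The heart of the proof is the verification of Theorem~\ref{9thm wsfh}, for which I would take $C_k=(k+2)^{2/p}$, so that $\sum_k(1/C_k)^p=\sum_k(k+2)^{-2}<\infty$. One must show that the proportion of $n\in\{0,\dots,k\}$ with $P_k/P_{k-n}\ge C_k$ tends to $1$, equivalently that the number of ``bad'' indices $m=k-n\le k$ with $\log P_m>\log P_k-\log C_k$ is $o(k)$. A bad $m$ forces $\log P_m$ to lie in the band $(\log P_k-\log C_k,\log P_k]$ of width $\frac{2}{p}\log(k+2)$; since the profile drops by $\log a$ at each non-plateau step, the non-plateau indices in this band number only $O(\log C_k)=O(\log k)$, while the plateau indices in the band are captured by at most the two plateaus whose levels meet the band. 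Here lies the main obstacle: one must arrange the plateaus to be simultaneously \emph{long} (the lengths $d_j\to\infty$ being forced by \eqref{carachcsub}) and yet \emph{sparse relative to their position}, so that the relevant plateaus contribute only $o(k)$ bad indices. This is exactly what the growth $u_j=2^j$, $d_j=j$ secures: a descent of height $\log C_k=O(\log k)$ cannot cross a whole long part (because $u_{j-1}\log a$ dwarfs $\log C_k$), so it meets at most one earlier plateau besides a portion of the current one, whence the total bad count for $k$ in block $j$ is at most $d_{j-1}+d_j+O(\log k)=O(j)$, negligible against $k$, which is at least the first index $\sum_{i<j}(u_i+d_i)\approx 2^{j}$ of block $j$. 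With the hypothesis of Theorem~\ref{9thm wsfh} thereby established, that theorem yields the absence of a frequently hypercyclic subspace, completing the proof.
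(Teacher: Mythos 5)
Your proposal is correct and follows essentially the same route as the paper: both reduce the statement to exhibiting a weight sequence taking only the values $1$ and $a>1$, with runs of $1$'s whose lengths tend to infinity (giving \eqref{carachcsub}) interlaced with long runs of $a$'s (giving \eqref{caracfhc} and the hypothesis of Theorem~\ref{9thm wsfh}). The paper takes both run lengths equal to $n+1$, the blockwise constant $C_l=2^{a_{2n+1}-a_{2n}}$, and a direct count of the iterates whose window swallows a full run of $2$'s, whereas you take exponentially long runs of $a$'s, $C_k=(k+2)^{2/p}$, and a level-set count in a band of width $\log C_k$; these are only quantitative variants of the same argument.
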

\begin{proof}
Let $(a_n)_{n\ge 0}$ be a strictly increasing sequence of integers such that $a_0=1$. We consider the weighted sequence $w$ given by
$w_k=2$ if $k\in[a_{2n},a_{2n+1}[$ for some $n\ge 0$, and $w_k=1$ otherwise.

Since the sequence $w$ is bounded, the weighted shift $B_w$ is an operator on $l^p$ and in view of \eqref{caracfhc}, we know that if \[\sum_{n=0}^{\infty}\frac{a_{2n+2}-a_{2n+1}}{(2^{a_{2n+1}-a_{2n}})^p}<\infty,\] 
then $B_w$ is frequently hypercyclic because
\[\sum_{k=1}^{\infty}\frac{1}{\prod_{\nu=1}^{k}|w_{\nu}|^p}\le\sum_{n=1}^{\infty}\Big(\frac{1}{2^n}\Big)^p+
\sum_{n=0}^{\infty}\frac{a_{2n+2}-a_{2n+1}}{(2^{a_{2n+1}-a_{2n}})^p}.\]
Thanks to \eqref{carachcsub}, we also know that if $a_{2n+2}-a_{2n+1}$ tends to infinity, then $B_w$ possesses a hypercyclic subspace and finally, we remark that we have, for any $a_{2n+1}\le l< a_{2n+3}$, 
\[\#\{n\ge 0:\|B_w^ne_l\|\ge 2^{a_{2n+1}-a_{2n}}\}\ge a_{2n}.\]
Thus, if for any $a_{2n+1}\le l< a_{2n+3}$, we let $C_l=2^{a_{2n+1}-a_{2n}}$, then we have
\[\frac{\#\{n\ge 0:\|B_w^ne_l\|\ge C_l\}}{l+1}\ge \frac{a_{2n}}{a_{2n+3}} \quad
\text{and} \quad \sum_{l=a_1}^{\infty}\frac{1}{C^p_l}=\sum_{n=0}^{\infty}\frac{a_{2n+3}-a_{2n+1}}{(2^{a_{2n+1}-a_{2n}})^p}.\]
Therefore, if 
\[\sum_{n=0}^{\infty}\frac{a_{2n+3}-a_{2n+1}}{(2^{a_{2n+1}-a_{2n}})^p}<\infty \quad\text{and}\quad \frac{a_{2n}}{a_{2n+3}}\rightarrow 1,\]
we deduce from Theorem~\ref{9thm wsfh} that $B_w$ does not possess any frequently hypercyclic subspace.

We now show that if we let \[a_{2n}=1+n(n+1) \quad\text{and}\quad a_{2n+1}=1+(n+1)^2,\]
then each of the properties above is satisfied, which will conclude the proof. We first remark that
\[a_{2n+1}-a_{2n}=n+1=a_{2n+2}-a_{2n+1}.\] The operator $B_w$ is thus frequently hypercyclic because
 \[\sum_{n=0}^{\infty}\frac{a_{2n+2}-a_{2n+1}}{(2^{a_{2n+1}-a_{2n}})^p}=\sum_{n=0}^{\infty}\frac{n+1}{(2^{n+1})^p} <\infty,\] 
 and, as $a_{2n+2}-a_{2n+1}$ tends to infinity, the operator $B_w$ possesses a hypercyclic subspace. Moreover,
we remark that
 \[\frac{a_{2n}}{a_{2n+3}}=\frac{1+n(n+1)}{1+(n+2)^2}\rightarrow 1\]
and
\[\sum_{n=0}^{\infty}\frac{a_{2n+3}-a_{2n+1}}{(2^{a_{2n+1}-a_{2n}})^p}=\sum_{n=0}^{\infty}\frac{(n+2)^2-(n+1)^2}{(2^{n+1})^p}=\sum_{n=0}^{\infty}\frac{2n+3}{(2^{n+1})^p}<\infty.\]
The operator $B_w$ is thus a frequently hypercyclic weighted shift possessing a hypercyclic subspace and no frequently hypercyclic subspace.
\end{proof}

\section{Existence of frequently hypercyclic subspaces}\label{exist}

Let $A$ be a set of positive lower density and $(n_k)$ the increasing enumeration of~$A$.
It is obvious that if $J$ is a set of positive lower density, then $\{n_k:k\in J\}$ is still a set of positive lower density. Using this argument, it is not difficult to adapt the proof of Theorem~\ref{Crit M0} in order to obtain the following generalization:

\begin{theorem}\label{9fhcsubsi}
Let $X$ be a separable infinite-dimensional Banach space and ${T\in L(X)}$.
If $T$ satisfies the Frequent Hypercyclicity Criterion and if there exists an infinite-dimensional closed subspace $M_0\subset X$ and a set $A$ of positive lower density such that for any $x\in M_0$, \[T^nx\xrightarrow[n\rightarrow \infty]{n\in A} 0,\]
then $T$ possesses a frequently hypercyclic subspace.
\end{theorem}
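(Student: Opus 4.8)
The plan is to revisit the proof of Theorem~\ref{Crit M0} and to notice that the stronger hypothesis there, namely $T^nx\to 0$ for \emph{all} $n$ and every $x\in M_0$, is used only to kill the $M_0$-component of a candidate vector at the instants where the frequent approximation is produced. Since frequent hypercyclicity concerns return sets of positive \emph{lower} density, it suffices to force all those approximation instants to lie inside the prescribed set $A$ and then to invoke the weaker decay $T^nx\xrightarrow[n\in A]{}0$. Throughout, write $\delta:=\underline{\text{dens}}(A)>0$ and let $(n_k)_{k\ge 0}$ be the increasing enumeration of $A$.

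First I would record the elementary density fact that drives the reindexing: if $J\subseteq\Z_+$ has $\underline{\text{dens}}(J)>0$, then $B:=\{n_k:k\in J\}$ has $\underline{\text{dens}}(B)\ge\delta\,\underline{\text{dens}}(J)>0$. Indeed, for large $K$ one has $K+1=\#(A\cap[0,n_K])\ge\delta\, n_K$, so $n_{K+1}\le (K+2)/\delta$; since $\#(B\cap[0,N])=\#(J\cap[0,K])$ for $n_K\le N<n_{K+1}$ and $N+1\le n_{K+1}+1$, dividing and letting $K\to\infty$ yields the claim. I would also note that the reindexing can only \emph{enlarge} gaps, since $n_{k'}-n_k\ge k'-k$; hence every separation property imposed on the index sets transfers to the sets $B\subseteq A$.

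Then I would run the construction behind Theorem~\ref{Crit M0}, transported along $(n_k)$. Fix a dense sequence $(y_j)$ in $X$ and a normalized basic sequence $(f_i)$ spanning an infinite-dimensional subspace of $M_0$. Exactly as in that proof, the Frequent Hypercyclicity Criterion is used to manufacture small correction vectors $z_i$ from the unconditionally convergent series $\sum_n S^ny_j$, the one change being that the approximation windows attached to the target $y_j$ by the correction $z_i$ are now placed on sets of the form $\{n_k:k\in J_{i,j}\}\subseteq A$, where the $J_{i,j}$ are the pairwise \emph{well-separated} sets of positive lower density supplied by the usual argument. By the previous paragraph each such window lies in $A$, has positive lower density, and retains the separation needed to keep distinct windows from overlapping; choosing the $J_{i,j}$ sufficiently sparse keeps $\|z_i\|$ small, so that $u_i:=f_i+z_i$ is a basic sequence equivalent to $(f_i)$, the relevant perturbation estimate being Lemma~\ref{basseq}. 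Put $M:=\overline{\Span}\{u_i\}$. For a nonzero $x=\sum_i c_iu_i\in M$, write $x=m+v$ with $m=\sum_i c_if_i\in M_0$ and $v=\sum_i c_iz_i$, and let $i_0$ be the least index with $c_{i_0}\ne 0$. On the window attached to $(i_0,j)$ every index $n=n_k$ lies in $A$, so $T^nm\to 0$; the term $c_{i_0}T^nz_{i_0}$ approximates $c_{i_0}y_j$; and the remaining contributions $\sum_{i\ne i_0}c_iT^nz_i$ are negligible because at such instants no other correction is active. Hence $\|T^nx-c_{i_0}y_j\|$ is small on a set of positive lower density, and as $j$ varies $\{c_{i_0}y_j\}$ is dense; therefore every nonzero $x\in M$ is frequently hypercyclic and $M$ is a frequently hypercyclic subspace.

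I expect the main obstacle to be organizational rather than computational. In the original proof the decay $T^nm\to 0$ holds for every $n$ and so matches automatically whatever instants the Criterion produces; here the set $A$ must be fixed \emph{first}, every approximation window must be pushed into $A$ through the reindexing $J_{i,j}\mapsto\{n_k:k\in J_{i,j}\}$, and only afterwards does the decay along $A$ do its work, so the order of the constructions is forced. The accompanying delicate point is to verify that restricting the windows to $A$ damages neither the smallness of the corrections that keeps $(u_i)$ basic nor the gap condition that makes $\sum_{i\ne i_0}c_iT^nz_i$ negligible; both survive because they are statements about norms of fixed vectors and finite linear combinations, insensitive to the particular time index once the separation has been transported through $(n_k)$. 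This is exactly what makes the adaptation routine once the density fact above is in hand.
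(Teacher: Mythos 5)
Your proposal is correct and follows exactly the route the paper itself indicates: the paper's entire justification for Theorem~\ref{9fhcsubsi} is the observation that if $(n_k)$ enumerates $A$ and $J$ has positive lower density then $\{n_k:k\in J\}$ does too, after which one "adapts the proof of Theorem~\ref{Crit M0}" by relocating the approximation windows into $A$ --- precisely your reindexing $J_{i,j}\mapsto\{n_k:k\in J_{i,j}\}$. Your write-up in fact supplies more detail than the paper (the quantitative density estimate and the check that separation and smallness of the corrections survive the transport), but the underlying argument is the same.
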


We can now state a sufficient condition for a frequently hypercyclic weighted shift on $l^p$ to possess a frequently hypercyclic subspace.

\begin{theorem}\label{9fhcsubw}
Let $B_w:l^p\to l^p$ be a frequently hypercyclic weighted shift and
\[G(k,C):=\{n\ge 0:\|B_w^ne_k\|\le C\},\ k\ge 0,\ C>0.\]
If there exists a strictly increasing sequence $(k_l)_{l\ge 1}\subset \Z_+$  and $C>0$ such that $\bigcap_{l\ge 1}G(k_l,C)$ is of positive lower density, then $B_w$ possesses a frequently hypercyclic subspace.
\end{theorem}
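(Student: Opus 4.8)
The plan is to apply Theorem~\ref{9fhcsubsi}. Since $B_w$ is frequently hypercyclic, the Bayart--Ruzsa characterization recalled above guarantees that $B_w$ satisfies the Frequent Hypercyclicity Criterion, so the first hypothesis of Theorem~\ref{9fhcsubsi} is automatic. It therefore suffices to produce an infinite-dimensional closed subspace $M_0\subset l^p$ and a set $A$ of positive lower density such that $B_w^n x\to 0$ as $n\to\infty$ along $n\in A$, for every $x\in M_0$.

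The natural candidates are
\[A:=\bigcap_{l\ge 1}G(k_l,C)\qquad\text{and}\qquad M_0:=\overline{\Span}\{e_{k_l}:l\ge 1\}.\]
By hypothesis $A$ has positive lower density, and $M_0$ is an infinite-dimensional closed subspace since the $e_{k_l}$ are infinitely many basis vectors. It remains to check the convergence condition. Recall that $B_w^n e_k=\big(\prod_{\nu=k-n+1}^k w_\nu\big)e_{k-n}$ for $n\le k$ and $B_w^n e_k=0$ for $n>k$, so that $\|B_w^n e_k\|=\prod_{\nu=k-n+1}^k|w_\nu|$.

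Now fix $x=\sum_{l\ge 1}c_l e_{k_l}\in M_0$, where $\sum_l|c_l|^p<\infty$. For a fixed $n$, the nonzero images $B_w^n e_{k_l}$ are scalar multiples of the pairwise distinct basis vectors $e_{k_l-n}$ (distinct because the $k_l$ are distinct), so the norm splits as an $\ell^p$ sum:
\[\|B_w^n x\|^p=\sum_{l:\,k_l\ge n}|c_l|^p\,\|B_w^n e_{k_l}\|^p.\]
For $n\in A$ we have $\|B_w^n e_{k_l}\|\le C$ for every $l$, hence $\|B_w^n x\|^p\le C^p\sum_{l:\,k_l\ge n}|c_l|^p$. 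Since $k_l\to\infty$, the index set $\{l:k_l\ge n\}$ is, for $n$ large, a tail $\{l\ge L(n)\}$ with $L(n)\to\infty$; thus the right-hand side is $C^p$ times a tail of the convergent series $\sum_l|c_l|^p$ and tends to $0$. Therefore $B_w^n x\to 0$ along $A$, and Theorem~\ref{9fhcsubsi} delivers a frequently hypercyclic subspace.

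The only point requiring care is the decomposition of $\|B_w^n x\|^p$ as an $\ell^p$ sum, which relies on the fact that the images $B_w^n e_{k_l}$ live on distinct coordinates; combined with the uniform bound $C$ valid precisely on $A$, this is what converts the positive-lower-density hypothesis into convergence to zero along $A$. Note that along arbitrary $n$ no such convergence can hold, since a hypercyclic operator has dense orbits, which is exactly why the set $A$ is essential. I do not anticipate any genuine obstacle beyond the correct choice of $M_0$ and $A$.
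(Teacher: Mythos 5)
Your proof is correct and follows essentially the same route as the paper: the same choice of $M_0=\overline{\Span}\{e_{k_l}:l\ge 1\}$ and $A=\bigcap_{l\ge 1}G(k_l,C)$, the same $\ell^p$-splitting of $\|B_w^n x\|^p$ bounded by $C^p$ times a tail sum, and the same appeal to Theorem~\ref{9fhcsubsi} together with the fact that frequently hypercyclic weighted shifts satisfy the Frequent Hypercyclicity Criterion. The extra justification you give for the coordinate decomposition is a detail the paper leaves implicit, but it does not change the argument.
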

\begin{proof}
Let $M_0=\overline{\Span}\{e_{k_l}:l\ge 1\}$. If $x\in M_0$ and $n\in \bigcap_{l\ge 1}G(k_l,C)$, we have, by definition of $G(k,C)$, 
\[\|B^n_wx\|^p=\sum_{l=1}^{\infty}|x_{k_l}|^p \|B^n_w{e_{k_l}}\|^p
=\sum_{k_l\ge n} |x_{k_l}|^p \|B^n_w{e_{k_l}}\|^p
\le \sum_{k_l\ge n} C^p|x_{k_l}|^p\xrightarrow[n\to \infty]{} 0.\]
Since each frequently hypercyclic weighted shift $B_w$ on $l^p$ satisfies the Frequent Hypercyclicity Criterion and since $\bigcap_{l\ge 1}G(k_l,C)$ is of positive lower density, we conclude by using Theorem~\ref{9fhcsubsi}.
\end{proof}

\begin{example}[\bf{Weighted shift with a frequently hypercyclic subspace}]\label{example}
Let $w=(4,1^{a_1},4,1^{a_2},4,1^{a_3},4,1^{a_{4}},4,\dots)$ where $(a_n)$ is an increasing sequence of positive integers and $1^{a_n}$ designates a block of $1$ of length $a_n$. For any value of $a_n$, the weighted shift $B_w$ is an operator on $l^p$, and by~\eqref{caracfhc}, $B_w$ is frequently hypercyclic if and only if
\begin{equation}
\sum_{k=1}^{\infty}\frac{a_k}{(4^k)^p}<\infty.
\label{9condfhcs}
\end{equation}
Moreover, we see that 
\begin{equation*}
G\Big(\sum_{k=1}^na_k+n,1\Big)=[0,a_n]\cup \Big[\sum_{k=1}^na_k+(n+1),\infty\Big[,
\end{equation*}
and since $(a_n)$ is an increasing sequence, we get
\begin{align*}
A:=\bigcap_{n=1}^{\infty}G\Big(\sum_{k=1}^na_k+n,1\Big)&=\bigcap_{n=1}^{\infty}\Big([0,a_n]\cup \Big[\sum_{k=1}^na_k+(n+1),\infty\Big[\Big)\\
&=[0,a_1]\cup\bigcup_{n=1}^{\infty} \Big[\sum_{k=1}^{n}a_k+(n+1),a_{n+1}\Big].
\end{align*}
In particular, we deduce that if
\begin{equation}
\liminf_{n}\frac{a_n-\sum_{k=1}^{n-1}a_k-n}{\sum_{k=1}^{n}a_k+(n+1)}>0,
\label{9lpd}
\end{equation}
then $A$ is a set of positive lower density.
We remark that, if we let $a_n=3^{n-1}$, then \eqref{9condfhcs} and \eqref{9lpd} are satisfied. Indeed, we have
\begin{gather*}\sum_{k=1}^{\infty}\frac{3^{k-1}}{(4^k)^p}<\infty \quad\text{and}\\
\liminf_{n}\frac{3^{n-1}-\sum_{k=1}^{n-1}3^{k-1}-n}{\sum_{k=1}^{n}3^{k-1}+(n+1)}=
\liminf_{n}\frac{3^{n-1}-\frac{1}{2} (3^{n-1}-1)-n}{\frac{1}{2}(3^{n}-1)+n+1}=\frac{1}{3}.
\end{gather*}
Therefore, we deduce from Theorem~\ref{9fhcsubw} that if $a_n=3^{n-1}$, then $B_w$ possesses a frequently hypercyclic subspace. \hfill\ensuremath{\square}
\end{example}

In view of Theorem~\ref{mainthm} and Example~\ref{example}, we deduce on the one hand that there exist some frequently hypercyclic weighted shifts on $l^p$ with a frequently hypercyclic subspace and some others without a frequently hypercyclic subspace, and, on the other hand, that the existence of a frequently hypercyclic subspace is not equivalent to the existence of a hypercyclic subspace and the frequent hypercyclicity. Therefore, we pose the following question:\\

\noindent\textbf{Question 1}: Which weighted shifts on $l^p$ possess a frequently hypercyclic subspace?

\end{document}